\theoremstyle{definition}
\newtheorem{theorem}{Theorem}[section]
\newtheorem{conjecture}[theorem]{Conjecture}
\newtheorem{proposition}[theorem]{Proposition}
\newtheorem{remark}[theorem]{Remark}
\newtheorem{corollary}[theorem]{Corollary}
\newtheorem*{remark*}{Remark}
\numberwithin{equation}{section}
\newcommand{\R}{\mathbb{R}}
\title[High Codimension Ancient Mean Curvature Flow]{Construction of High Codimension Ancient Mean Curvature Flows}
\author{Douglas Stryker}
\address{Department of Mathematics, Massachusetts Institute of Technology, Cambridge, MA 02139, USA}
\email{stryker@mit.edu}
\author{Ao Sun}
\address{Department of Mathematics, Massachusetts Institute of Technology, Cambridge, MA 02139, USA}
\email{aosun@mit.edu}
\begin{document}

\maketitle

\begin{abstract}
We construct a class of compact ancient solutions to the mean curvature flow in Euclidean space with high codimension. In particular, we construct higher codimensional ancient curve shortening flows. Moreover, we characterize the asymptotic behavior of these solutions.

Add on remark: the construction in this paper has been discovered by Altschuler D.-Altschuler S.-Angenent-Wu in \cite{AAAW}. 
\end{abstract}

\section{Introduction}
A family of immersed $n$-dimensional submanifolds $M_t^n \subset \R^N$ evolves along the mean curvature flow if its coordinates satisfy the equation
\[
\partial_t x = -\vec{H},
\]
where $\vec{H}$ is the mean curvature vector, given by minus the trace of the second fundamental form. This equation can also be written in the useful form
\[
\partial_t x = \Delta_{M_t} x.
\]
The mean curvature flow is the negative gradient flow for the volume of submanifolds induced by $\R^N$, so solutions to this flow optimally decrease their volume.
In particular, the one dimensional mean curvature flow, called the curve shortening flow, optimally decreases the length of immersed curves.

The mean curvature flow has been the subject of extensive study in codimension one (see \cite{bwhite02}, \cite{CM-generic}, \cite{CMP-MCF}, \cite{CM-uniqueness}); namely for hypersurfaces $M_t^n \subset \R^{n+1}$. The mean curvature flow for higher codimension in Euclidean space, which is the focus of this paper, presents a unique challenge (see \cite{Wang-Long-time}, \cite{Smoczyk}, \cite{CM-complexity}, \cite[\S 0.3]{CM-regularity}). For example, the avoidance principle used to handle flows with codimension one fails for higher codimension.

In this paper, the primary objects of study are \emph{ancient} solutions to the mean curvature flow. A solution is called ancient if it is defined for all time in the interval $(-\infty, 0)$. Our main result is a constructive proof of the following theorem.

\begin{theorem}\label{thm:main}
For every positive integer $m$, there is a compact, connected, ancient solution to the curve shortening flow in $\R^{2m}$ that does not lie in any $(2m-1)$-dimensional Euclidean subspace.
\end{theorem}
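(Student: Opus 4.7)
The plan is to construct the ancient flow explicitly via a Fourier-type ansatz. Fix distinct positive integers $k_1 < k_2 < \cdots < k_m$ (taking $k_j = j$ is a convenient choice), let $\{e_1,\ldots,e_{2m}\}$ be the standard basis of $\R^{2m}$, and seek a family of curves
\[
\gamma_t(\theta) = \sum_{j=1}^m a_j(t)\bigl(\cos(k_j\theta)\, e_{2j-1} + \sin(k_j\theta)\, e_{2j}\bigr), \qquad \theta \in \R/2\pi\Z,
\]
with positive amplitudes $a_j(t)$ to be determined. Block-by-block one checks $\langle \gamma_t, \partial_\theta \gamma_t\rangle = 0$, so the position vector is everywhere normal to the curve; moreover $|\partial_\theta\gamma_t|^2 = \sum_j k_j^2 a_j^2$ is independent of $\theta$, which reduces the Laplace--Beltrami operator to $\Delta_{\gamma_t} = |\partial_\theta\gamma_t|^{-2}\partial_\theta^2$. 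Since $\partial_t\gamma_t$ has the same block structure as $\gamma_t$, it is automatically normal to $\partial_\theta\gamma_t$, so matching Fourier coefficients in $\partial_t x = \Delta_{M_t} x$ reduces the curve shortening flow to the finite-dimensional ODE system
\[
\dot a_j = -\frac{k_j^2\, a_j}{\sum_{i=1}^m k_i^2 a_i^2}, \qquad j=1,\ldots,m.
\]

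Next I would analyze this system. Summing gives the explicit conservation law $(d/dt)\sum_j a_j^2 = -2$, so after translating time one has $\sum_j a_j(t)^2 = -2t$. Dividing the equations gives $d(\log a_j^2)/d(\log a_1^2) = k_j^2/k_1^2$, hence $a_j(t) = C_j\, a_1(t)^{k_j^2/k_1^2}$ for positive constants $C_j$ determined by initial data. Substituting back reduces the system to a scalar separable ODE for $a_1$; elementary analysis shows that for any choice of the $C_j > 0$ there is a unique solution on the time interval $(-\infty, 0)$ with $a_j(t) > 0$ throughout. Asymptotically, as $t\to -\infty$, the conservation law forces $a_m(t) \sim \sqrt{-2t}$ (behaving like an ordinary shrinking circle in the dominant $\R^2$ block), while the smaller amplitudes satisfy $a_j(t) \sim c_j\,(-t)^{k_j^2/(2k_m^2)}$ and blow up more slowly; as $t\to 0^-$ all amplitudes go to zero simultaneously.

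Finally I would verify the required geometric properties. Compactness and connectedness are immediate from the parametrization by $S^1$, and $\gamma_t$ is a smooth immersion because $|\partial_\theta\gamma_t|^2 = \sum_j k_j^2 a_j^2 > 0$. With $k_j = j$ the first two coordinates of $\gamma_t$ already determine $\theta$, so the curve is in fact embedded for every $t<0$. If $\gamma_t$ were contained in an affine hyperplane $\{\langle \cdot, v\rangle = c\}$, integrating $\langle\gamma_t,v\rangle = c$ in $\theta$ would force $c=0$, and then $\sum_j a_j(t)(v_{2j-1}\cos(k_j\theta) + v_{2j}\sin(k_j\theta))\equiv 0$ combined with the linear independence of $\{\cos(k_j\theta),\sin(k_j\theta)\}_{j=1}^m$ and positivity of the $a_j(t)$ forces $v=0$; hence the curve spans $\R^{2m}$. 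The only real work is the global existence of the ODE on $(-\infty,0)$, and the explicit power-law identity $a_j = C_j a_1^{k_j^2/k_1^2}$ together with the linear conservation law $\sum a_j^2 = -2t$ makes this routine—so the ``hard part'' is less a technical obstacle than simply identifying the right ansatz.
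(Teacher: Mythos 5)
Your proposal is correct and follows essentially the same route as the paper: the same sinusoidal torus-curve ansatz, the same reduction (constant speed in $\theta$, so $\Delta_{\gamma_t}=|\partial_\theta\gamma_t|^{-2}\partial_\theta^2$, then coefficient matching) to a first-order ODE system, and the same implicit/conservation-law solution $\tfrac12\sum_j a_j^2=-t$ that the paper uses in its appendix, with your general amplitudes $a_j=C_ja_1^{k_j^2/k_1^2}$ just a mild generalization of the paper's choice $a_j=r^{k_j^2}$. Your hyperplane argument for full-dimensionality is a slightly more detailed version of the paper's linear-independence remark, and everything checks out.
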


As far as we know, this is the first construction of a curve shortening flow in Euclidean space that is not contained in a two-dimensional Euclidean subspace. Moreover, by taking products of these ancient curve shortening flows, we construct ancient mean curvature flows of arbitrary dimension. In particular, we prove the following theorem.

\begin{theorem}\label{thm:compact}
Let $m$ and $n$ be positive integers with $m \geq n$. There exists an $n$-dimensional, compact, connected, ancient solution to the mean curvature flow in $\R^{2m}$ that does not lie in any $(2m-1)$-dimensional Euclidean subspace.
\end{theorem}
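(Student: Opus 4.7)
The plan is to bootstrap Theorem \ref{thm:main} via a product construction. For $n=1$, Theorem \ref{thm:main} already gives the desired flow. For $n \geq 2$, since $m \geq n$ we have $m-n+1 \geq 1$, so Theorem \ref{thm:main} produces a compact, connected ancient curve shortening flow $\gamma_t \subset \R^{2(m-n+1)}$ that is not contained in any $(2(m-n+1)-1)$-dimensional Euclidean subspace and is defined on $(-\infty, 0)$. Separately, for each $i = 1, \ldots, n-1$, I take a round shrinking circle $C^i_t \subset \R^2$ of radius $\sqrt{-2t}$, which is a compact, connected ancient curve shortening flow on $(-\infty, 0)$ not contained in any line. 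I then form the product
\[
M_t \;=\; \gamma_t \,\times\, C^1_t \,\times\, \cdots \,\times\, C^{n-1}_t \;\subset\; \R^{2(m-n+1)} \oplus \underbrace{\R^2 \oplus \cdots \oplus \R^2}_{n-1} \;=\; \R^{2m},
\]
which is $n$-dimensional, compact, and connected for every $t < 0$.

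Next I check that $M_t$ is itself an ancient mean curvature flow. Since the normal bundle of a Riemannian product splits as the direct sum of the normal bundles of the factors, the mean curvature vector of $M_t$ is the direct sum of the mean curvature vectors of the factors. Thus if every factor satisfies $\partial_t x = -\vec{H}$, so does the product, and since every factor is defined on $(-\infty, 0)$, the product is an ancient mean curvature flow on the same time interval.

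Finally I verify the nondegeneracy: $M_t$ is not contained in any $(2m-1)$-dimensional Euclidean subspace of $\R^{2m}$. I prove the general linear-algebraic statement that if $A \subset \R^a$ is not contained in any $(a-1)$-dimensional affine subspace and $B \subset \R^b$ is not contained in any $(b-1)$-dimensional affine subspace, then $A \times B \subset \R^{a+b}$ is not contained in any $(a+b-1)$-dimensional affine subspace. Indeed, any affine hyperplane is the level set of a nonzero affine functional $\ell(x,y) = \ell_A(x) + \ell_B(y) + c$; restricting to $A \times B$ and fixing one variable forces $\ell_A$ and $\ell_B$ each to be constant on $A$ and $B$ respectively, hence (by the nondegeneracy of $A$ and $B$) both linear parts must vanish, contradicting $\ell \neq 0$. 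Iterating this fact across the $n$ factors of $M_t$ yields the claim.

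The construction is entirely routine once Theorem \ref{thm:main} is in hand, so there is no real obstacle; the only minor point requiring care is to line up the time intervals of the factors so that all flows are simultaneously defined on $(-\infty, 0)$, which is handled by parabolic rescaling of $\gamma_t$ if necessary.
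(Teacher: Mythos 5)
Your proof is correct and takes essentially the same route as the paper, whose entire proof is ``take the product of $n$ appropriately chosen torus curves'': your round shrinking circles are exactly the $m=1$ torus curves, so your flow $\gamma_t \times C^1_t \times \cdots \times C^{n-1}_t \subset \R^{2m}$ is a particular instance of that construction. You simply make explicit the details the paper leaves implicit (that a product of mean curvature flows is a mean curvature flow, and the affine-span argument for products), which checks out since each torus curve affinely spans its factor by linear independence of $1, \cos(k_j\theta), \sin(k_j\theta)$.
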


Ancient solutions to the mean curvature flow are models for the singularities formed under the flow. Therefore, the study of ancient mean curvature flow is important to the study of the singular behavior of the mean curvature flow. Ancient solutions in codimension $1$ have been extensively studied, see \cite{Huisken-Sinestrari-ancient}, \cite{Haslhofer-Hershkovits}, \cite{ADS} \cite{Brendle-Choi}, \cite{choi2018ancient}. 

The lack of examples of higher codimensional ancient mean curvature flows is the main challenge. Presently, we know very few constructions for higher codimensional ancient mean curvature flow. Most of them are solitons of the Lagrangian mean curvature flow (see \cite{Lee-Wang}, \cite{Castro-Lerma}). Choi-Mantoulidis construct ancient mean curvature flows from unstable minimal submanifolds in \cite{ChMa}. Our construction differs from these examples in important ways.

Let us stress the difference between previous constructions and our construction by considering the ancient curve shortening flow. The solitons (including translating solitons, rotating solitons, and self-shrinkers) to the curve shortening flow are known to satisfy a second order ODE, and are therefore planar. Some other known non-soliton curve shortening flows are also planar (see \cite{DHS}). The construction by Choi-Mantoulidis requires an unstable minimal submanifold, but it is known that the only $1$-dimensional minimal submanifolds in $\R^N$ are straight lines, which are stable. Therefore our examples capture some unique features.

\bigskip
We mention two implications of our examples.

First, our example suggests the sharp value of a codimension bound of Colding-Minicozzi \cite{CM-complexity}. Recall that the entropy of a submanifold $M \subset \R^N$is defined as
\[ \lambda(M) := \sup_{s \in \R_{>0},~y \in \R^N} (4\pi)^{-\frac{n}{2}}\int_{sM+y} e^{-\frac{|x|^2}{4}}, \]
i.e.~the supremum of the Gaussian integral over all dilations and translations of the submanifold. See \cite{CM-generic} for further discussion. Colding-Minicozzi prove that an ancient mean curvature flow $M_t^n\subset \R^N$ must lie in a Euclidean subspace of dimension $d\leq C_n\sup_{t}\lambda(M_t)$ (see \cite[Corollary 0.7]{CM-complexity}). Our examples suggests that the sharp value of $C_1$ should be $2/\lambda(S^1)$. See the discussion of Conjecture \ref{conj:c1}.

Second, our examples illustrate an interesting relation between the codimension and the tangent flow. Recall that the tangent flow of an ancient mean curvature flow is the limit of $\frac{M_t}{\sqrt{-t}}$ as $t\to 0$ or $t\to -\infty$, and we call them the tangent flow at $0$ and $-\infty$ respectively. For the ancient solution we construct to prove Theorem \ref{thm:main}, the tangent flow at $0$ is the embedded circle, and the tangent flow at $-\infty$ is the circle with multiplicity. Therefore, our examples show that the tangent flow at $0$ \emph{cannot} bound the codimension of an ancient solution. However, our examples suggest that the tangent flow at $-\infty$ \emph{can} help to bound the codimension. In a paper in preparation \cite{SS}, we confirm this fact by generalizing a result of Colding-Minicozzi from \cite{CM-complexity}. 

\begin{remark}
After we uploaded the first version of this preprint on Arxiv, we notice that the main construction in this paper has been discovered by Altschuler D.-Altschuler S.-Angenent-Wu in \cite{AAAW}. In \cite{AAAW}, they constructed the ancient curve shortening flow from the symmetry of $R^N$.

Since the motivation of our preprint is different from \cite{AAAW}, we still hope the readers could find some innovating ideas from this preprint.
\end{remark}

\bigskip
{\bf Acknowledgements.} We would like to thank Professor Minicozzi for his suggestions and helpful comments. We would like to appreciate Christos Mantoulidis for bringing our attention to \cite{ChMa}, and explaining the main result to us. It is our pleasure to thank Dr.\ Gerovitch, Professor Jerison, and Professor Moitra for supporting our research.

\section{Construction}
In this section, we construct ancient mean curvature flows in Euclidean space with high codimension. We begin by constructing a family of solutions we call torus curves.

\subsection{Torus Curves} \label{sec:toruscurve}
Let $k_1,\ \hdots,\ k_m$ be an increasing list of positive integers. We construct a $t$-parametrized family of curves $\gamma^{(k_1, \hdots, k_m)}_t \subset \R^{2m}$ (we denote it by $\gamma_t$ when the integers $k_j$ are implied for ease of notation) with coordinate functions of the form
\begin{equation}\label{eq:toruscurve}
(\gamma_t(\theta))_{2j-1} = r(t)^{k_j^2}\cos(k_j\theta)~,~(\gamma_t(\theta))_{2j} = r(t)^{k_j^2}\sin(k_j\theta)
\end{equation}
for $j = 1, \hdots, m$ and $\theta \in [0, 2\pi)$, where $r(t)$ is a positive function.

Intuitively, $\gamma_t$ is a curve on the torus \[  S^1(r^{k_1^2}) \times \hdots \times  S^1(r^{k_m^2}) \subset \R^{2m} \]
that wraps around the $j$th copy of the circle $k_j$ times at constant speed.

The following proposition immediately implies Theorem \ref{thm:main}.

\begin{proposition}\label{prop:toruscurve}
There is a unique positive function $r(t)$ defined for all $t\in(-\infty, 0)$ satisfying
\begin{equation}\label{eq:toruslimit}
\lim_{t \to -\infty} r(t) = +\infty \text{~and~} \lim_{t \to 0} r(t) = 0
\end{equation}
so that the family of curves $\gamma_t$ with coordinate functions given by (\ref{eq:toruscurve}) defines an ancient solution to the curve shortening flow. In particular, $\gamma_t$ does not lie in any $(2m-1)$-dimensional Euclidean subspace.
\end{proposition}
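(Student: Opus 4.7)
The plan is to substitute the ansatz (\ref{eq:toruscurve}) directly into the curve shortening equation $\partial_t x = \Delta_{M_t} x$ and show that the PDE collapses to a scalar ODE for $r(t)$. The key observation is that each coordinate pair contributes $k_j^2\, r^{2k_j^2}$ to $|\partial_\theta \gamma_t|^2$, so
\[ |\partial_\theta \gamma_t(\theta)|^2 = \sum_{j=1}^m k_j^2\, r(t)^{2k_j^2}, \]
which is independent of $\theta$. Consequently the intrinsic Laplacian along $\gamma_t$ is simply $|\partial_\theta \gamma_t|^{-2}\partial_\theta^2$, with no tangential correction needed since $|\partial_\theta \gamma_t|$ has no $\theta$-derivative. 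Within the $j$-th coordinate pair, $\partial_\theta^2 \gamma_t$ is $-k_j^2$ times the position vector $r^{k_j^2}(\cos(k_j\theta), \sin(k_j\theta))$, while $\partial_t \gamma_t$ has the exact same angular dependence, so the two sides of the flow equation match pair by pair.

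Matching coefficients within each pair produces a single consistent autonomous ODE,
\[ r'(t) = -\frac{r(t)}{\sum_{j=1}^m k_j^2\, r(t)^{2k_j^2}}, \]
which I would solve by separation of variables. Rewriting as $\sum_j k_j^2\, r^{2k_j^2-1}\, dr = -dt$ and integrating gives the implicit solution
\[ t = -\tfrac{1}{2}\sum_{j=1}^m r(t)^{2k_j^2} \]
once the integration constant is fixed so that $r=0$ corresponds to $t=0$. The right-hand side is a strictly decreasing smooth bijection from $(0,\infty)$ onto $(-\infty,0)$ as a function of $r$, so this defines a unique smooth positive $r$ on $(-\infty, 0)$ satisfying $r(t)\to+\infty$ as $t\to-\infty$ and $r(t)\to 0$ as $t\to 0$, giving the required asymptotics (\ref{eq:toruslimit}).

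Finally, to show $\gamma_t$ does not lie in any affine hyperplane of $\R^{2m}$, I would invoke the linear independence of the functions $1, \cos(k_j\theta), \sin(k_j\theta)$ for $j = 1, \ldots, m$: a relation $\sum_i a_i (\gamma_t(\theta))_i = c$ valid for all $\theta$ reads $\sum_j r(t)^{k_j^2}\bigl(a_{2j-1}\cos(k_j\theta) + a_{2j}\sin(k_j\theta)\bigr) = c$, and since $r(t) > 0$ and the $k_j$ are distinct positive integers, this forces all $a_i$ and $c$ to vanish. The main substantive point of the proof is the preliminary observation that $|\partial_\theta \gamma_t|$ is $\theta$-independent along this ansatz; once that is in hand, both sides of the curve shortening flow have matching angular dependence in each coordinate pair, the PDE reduces to an integrable scalar ODE, and the rest is direct computation. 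The main obstacle I anticipate is verifying cleanly that no tangential reparametrization needs to be absorbed into the ansatz; this is precisely what the $\theta$-independence of the speed rules out.
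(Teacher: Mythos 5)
Your proposal is correct and follows essentially the same route as the paper: compute that the speed $|\partial_\theta\gamma_t|$ is $\theta$-independent so the Laplacian is $|\partial_\theta\gamma_t|^{-2}\partial_\theta^2$, match angular dependence pairwise to reduce to the ODE $r'=-r/\sum_j k_j^2 r^{2k_j^2}$, and solve it implicitly via $-2t=\sum_j r^{2k_j^2}$ with the constant fixed by the asymptotics, exactly as in the paper's Appendix. Your linear-independence argument for the non-degeneracy claim is also the paper's (one-line) argument, just spelled out.
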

\begin{proof}
To see that $\gamma_t$ does not lie in any $(2m-1)$-dimensional Euclidean subspace, we note that $\gamma_t$ has $2m$ linearly independent coordinate functions.

To show that $\gamma_t$ is an ancient solution to the curve shortening flow, we simply compute $\Delta_{\gamma_t}\gamma_t$ and $\partial_t \gamma_t$, and set them equal.

First, we compute $\Delta_{\gamma_t}\gamma_t$. The tangent velocity field of $\gamma_t$ is given by
\[ 
(\partial_{\theta}\gamma_t)_{2j-1} = -k_jr^{k_j^2}\sin(k_j\theta)~,~(\partial_{\theta}\gamma_t)_{2j} = k_jr^{k_j^2}\cos(k_j\theta)~,~|\partial_{\theta}\gamma_t|^2 = \sum_{j=1}^m k_j^2r^{2k_j^2}.
\]
Since $|\partial_{\theta} \gamma_t|$ is $\theta$-independent, the Laplacian $\Delta_{\gamma_t}$ is given by $|\partial_{\theta}\gamma_t|^{-2} \partial_{\theta}^2$.
Therefore, we have
\begin{equation}\label{eq:torusdelta}
(\Delta_{\gamma_t}\gamma_t)_{2j-1} = \frac{-k_j^2r^{k_j^2}\cos(k_j\theta)}{\sum_{j=1}^m k_j^2r^{2k_j^2}}~,~(\Delta_{\gamma_t}\gamma_t)_{2j} = \frac{-k_j^2r^{k_j^2}\sin(k_j\theta)}{\sum_{j=1}^m k_j^2r^{2k_j^2}}.
\end{equation}

Next, we compute $\partial_t\gamma_t$, which is given by
\begin{equation}\label{eq:torust}
(\partial_t\gamma_t)_{2j-1} = k_j^2r^{k_j^2-1}r'\cos(k_j\theta)~,~(\partial_t\gamma_t)_{2j} = k_j^2r^{k_j^2-1}r'\sin(k_j\theta).
\end{equation}

Combining (\ref{eq:torusdelta}) and (\ref{eq:torust}), we conclude that if $r$ satisfies the ODE
\begin{equation}\label{eq:ode}
r' = \frac{-r}{\sum_{j=1}^m k_j^2r^{2k_j^2}},
\end{equation}
then $\gamma_t$ is a curve shortening flow. So it suffices to find an ancient solution to the ODE (\ref{eq:ode}).

Via standard ODE techniques, one can show that there is a unique positive function $r(t)$ defined for $t \in (-\infty, 0)$ satisfying (\ref{eq:toruslimit}) and (\ref{eq:ode}). See Appendix Theorem \ref{thm:ODE theory} for detailed discussion.
\end{proof}

Before we consider the implications of the existence of the solution $\gamma_t$, we outline an important property it satisfies as $t$ tends to $-\infty$ and $0$.

\begin{proposition}\label{prop:torustangentflow}
The tangent flow to the solution $\gamma_t^{(k_1,\hdots,k_m)}$ at $t = -\infty$ is the multiplicity $k_m$ circle, and the tangent flow at $t = 0$ is the multiplicity $k_1$ circle.
\end{proposition}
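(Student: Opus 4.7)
The plan is to first integrate the ODE \eqref{eq:ode} in closed form. Separating variables gives
\[
\sum_{j=1}^{m} k_j^{2}\, r^{2k_j^{2}-1}\, dr \;=\; -\,dt,
\]
and antidifferentiating (and absorbing the constant of integration into the normalization $r(0)=0$) yields the clean first integral
\[
\sum_{j=1}^{m} r(t)^{2k_j^{2}} \;=\; -2t \qquad \text{for all } t\in(-\infty,0).
\]
This identity is the backbone of the argument: it lets me read off the precise asymptotic behavior of $r(t)$ at both ends of the time interval.

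Next I would analyze the two asymptotic regimes. As $t\to 0^{-}$, we have $r(t)\to 0$, so the term with the smallest exponent dominates the sum above, giving $r(t)^{2k_1^{2}} = -2t\bigl(1+o(1)\bigr)$; equivalently $r(t)^{k_1^{2}}\sim\sqrt{-2t}$. For $j>1$, since $k_j>k_1$, the comparison $r^{k_j^{2}}=(r^{k_1^{2}})^{k_j^{2}/k_1^{2}}$ gives $r(t)^{k_j^{2}} = o(\sqrt{-t})$. Symmetrically, as $t\to-\infty$, $r(t)\to+\infty$, so the term with the largest exponent dominates, giving $r(t)^{k_m^{2}}\sim\sqrt{-2t}$ and $r(t)^{k_j^{2}} = o(\sqrt{-t})$ for $j<m$.

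To extract the tangent flows, I would use the standard parabolic rescaling $\gamma_t^{\lambda}:=\lambda\,\gamma_{t/\lambda^{2}}$ and take $\lambda\to\infty$ (for the tangent flow at $0$) or $\lambda\to 0^{+}$ (for the tangent flow at $-\infty$). In the first case, using $r(s)^{k_1^{2}}\sim\sqrt{-2s}$ as $s\to 0^{-}$,
\[
\lambda\, r(t/\lambda^{2})^{k_1^{2}} \;\longrightarrow\; \sqrt{-2t},
\]
while for $j>1$,
\[
\lambda\, r(t/\lambda^{2})^{k_j^{2}} \;\sim\; \bigl(\sqrt{-2t}\bigr)^{k_j^{2}/k_1^{2}} \lambda^{\,1-k_j^{2}/k_1^{2}} \;\longrightarrow\; 0,
\]
since $k_j^{2}/k_1^{2}>1$. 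Hence the blow-up limit is the family $\bigl(\sqrt{-2t}\cos(k_1\theta),\sqrt{-2t}\sin(k_1\theta),0,\ldots,0\bigr)$, which is precisely the round shrinking circle parametrized $k_1$ times, i.e.\ the multiplicity $k_1$ circle. The $t\to-\infty$ case is parallel, replacing $k_1$ by $k_m$ and using that $1-k_j^{2}/k_m^{2}>0$ for $j<m$, so that the lower-index coordinates vanish in the limit and only the last pair survives with the correct amplitude $\sqrt{-2t}$ traversed $k_m$ times.

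The only substantive point that requires care is promoting the pointwise asymptotics of $r(t)$ to the locally smooth convergence of the rescaled curves needed to identify the tangent flow as a Brakke/smooth flow. This however follows immediately, since each coordinate of $\gamma_t^{\lambda}$ is an explicit trigonometric function of $\theta$ with amplitude depending only on $t$ through $\lambda\, r(t/\lambda^{2})^{k_j^{2}}$, and the asymptotic equivalences above are uniform on compact $t$-intervals in $(-\infty,0)$. Thus convergence holds in $C^{\infty}_{\mathrm{loc}}$, and the tangent flow identification is complete.
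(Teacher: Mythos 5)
Your argument is correct and takes essentially the same route as the paper: both rest on the first integral $\sum_{j=1}^m r(t)^{2k_j^2}=-2t$, the dominance of the smallest (resp.\ largest) exponent term as $r\to 0$ (resp.\ $r\to\infty$), and the observation that the remaining coordinates vanish after rescaling, leaving the multiplicity $k_1$ (resp.\ $k_m$) circle. The only cosmetic difference is that you phrase the blow-up as $\lambda\,\gamma_{t/\lambda^2}$ with the sharp asymptotic $r^{k_1^2}\sim\sqrt{-2t}$ (resp.\ $r^{k_m^2}\sim\sqrt{-2t}$), whereas the paper rescales directly by $1/\sqrt{-t}$ and uses two-sided bounds; the substance is identical.
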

\begin{proof}
Recall that to obtain the tangent flow at $-\infty$, we rescale the solution $\gamma_t$ by $\frac{1}{\sqrt{-t}}$ and take the limit as $t \to -\infty$. Since $r(t)$ satisfies the ODE (\ref{eq:ode}), we have by Theorem \ref{thm:ODE theory} that
\[\sum_{j=1}^m r^{2k_j^2}=-2t\ \text{for $t<0$}.\]
When $t<-m/2$ we have the $r>1$, which yields the inequality
\[
\frac{\sqrt{2}}{\sqrt{m}r^{k_m^2}} \leq \frac{1}{\sqrt{-t}} \leq \frac{\sqrt{2}}{r^{k_m^2}}.
\]
Since the first $2m-2$ components of $\gamma_t$ grow slower than $r^{k_m^2}$ and $r \to + \infty$ as $t \to -\infty$, the first $2m-2$ components of the rescaled flow become arbitrarily small as $t \to -\infty$. After rescaling, the last two components of $\gamma_t$ parametrize the multiplicity $k_m$ circle.

For the tangent flow at $0$, we make a similar argument. Since $r(t)$ satisfies (\ref{eq:ode}), it is straightforward to check that for $r \leq 1$ (i.e.~for $t\geq -m/2$ ), we have the bounds
\[
\frac{\sqrt{2}}{\sqrt{m}r^{k_1^2}} \leq \frac{1}{\sqrt{-t}} \leq \frac{\sqrt{2}}{r^{k_1^2}}.
\]
Since the last $2m-2$ components of $\gamma_t$ vanish faster than $r^{k_1^2}$ and $r \to 0$ as $t \to 0$, the last $2m-2$ components of the rescaled flow become arbitrarily small as $t \to 0$. After rescaling, the first two components of $\gamma_t$ parametrize the multiplicity $k_1$ circle.
\end{proof}

Using the limiting behavior of $\gamma_t$ as $t \to -\infty$, we can compute the entropy of $\gamma_t$.

\begin{corollary}\label{cor:torusentropy}
The curve shortening flow $\gamma_t^{(k_1,\hdots,k_m)}$ satisfies $\sup_t\lambda(\gamma_t) = k_m\lambda(S^1)$.
\end{corollary}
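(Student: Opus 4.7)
The plan is to combine Huisken's monotonicity of entropy under mean curvature flow with Proposition \ref{prop:torustangentflow}. By monotonicity of entropy (see \cite{CM-generic}), $t\mapsto\lambda(\gamma_t)$ is non-increasing, so $\sup_t\lambda(\gamma_t)=\lim_{t\to-\infty}\lambda(\gamma_t)$. By scale invariance, $\lambda(\gamma_t)=\lambda(\gamma_t/\sqrt{-t})$; and the identity $\sum_j r^{2k_j^2}=-2t$ from the proof of Proposition \ref{prop:torustangentflow} gives $r(t)^{k_m^2}/\sqrt{-t}\to\sqrt{2}$ while the other rescaled coordinates vanish. Thus $\gamma_t/\sqrt{-t}$ converges smoothly as a parametrized curve to the $k_m$-fold cover of the circle of radius $\sqrt{2}$ in $\R^{2m}$, and it suffices to prove
\[\lim_{t\to-\infty}\lambda(\gamma_t/\sqrt{-t})=k_m\lambda(S^1).\]

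For the lower bound $\liminf\geq k_m\lambda(S^1)$, I will test the Gaussian integral at the scale-translation pair $(s,y)=(1,0)$ at which $S^1(\sqrt{2})$ attains its entropy; dominated convergence under the smooth convergence of $\gamma_t/\sqrt{-t}$ gives
\[(4\pi)^{-1/2}\int_{\gamma_t/\sqrt{-t}}e^{-|x|^2/4}\longrightarrow k_m\lambda(S^1).\]

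The matching upper bound is the main obstacle. The plan is to confine the supremum defining $\lambda(\gamma_t/\sqrt{-t})$ to a compact set of $(s,y)$ uniformly in $t$ via tail estimates, and then use uniform convergence on this set. For $|y|$ large or $s$ small, the Gaussian integral is negligible because the rescaled curves have uniformly bounded length; for $s$ large, the Gaussian concentrates near $y$, and uniform curvature bounds inherited from the smooth convergence keep the local mass controlled. On the remaining compact range of $(s,y)$, smooth convergence yields uniform convergence of the Gaussian integrals to that of $k_m S^1(\sqrt{2})$, whose supremum is $k_m\lambda(S^1)$ by definition. The technical care lies in the large-$s$ tail bound; alternatively, one can invoke the standard upper semicontinuity of entropy under smooth convergence of compact submanifolds with uniformly bounded curvature.
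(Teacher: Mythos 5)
Your overall strategy --- monotonicity to reduce $\sup_t\lambda(\gamma_t)$ to the limit as $t\to-\infty$, a lower bound by testing at the shrinker scale, and an upper bound via the smooth convergence of $\gamma_t/\sqrt{-t}$ to the multiplicity-$k_m$ circle of radius $\sqrt2$ --- is viable, and your lower bound is in fact the same as the paper's: the paper tests the Gaussian integral at $\tilde s=\sqrt2\left(\sum_j r^{2k_j^2}\right)^{-1/2}$ and $y=0$, which by the identity $\sum_j r^{2k_j^2}=-2t$ is exactly your rescaling by $\sqrt{-t}$ with $(s,y)=(1,0)$, and it evaluates the $F$-value explicitly.

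The upper bound is where you diverge from the paper, and it is also where your sketch has a genuine soft spot. As literally stated, the supremum cannot be confined to a compact set of $(s,y)$ uniformly in $t$: for every large $s$ there are centers with $|y|\approx\sqrt2\,s$ for which $s(\gamma_t/\sqrt{-t})+y$ passes through the origin, and there the Gaussian integral is of size roughly $k_m$ --- not negligible, only (fortunately) below $k_m\lambda(S^1)$ because $\lambda(S^1)=\sqrt{2\pi/e}>1$. So the large-$s$ regime must be handled by a quantitative statement that at small scales the rescaled curve consists of at most $k_m$ nearly straight strands, so its Gaussian density ratios are at most $k_m(1+\epsilon)$; this (equivalently, the ``upper semicontinuity of entropy under smooth convergence'' you propose to invoke, whose proof is precisely this strand/density argument, and which here must be stated for immersed curves converging with multiplicity, not just embedded hypersurfaces) is the real content of the upper bound and is not carried out or precisely cited. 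By contrast, the paper's upper bound sidesteps all compactness and tail analysis: for $t$ very negative it bounds the Gaussian weight on $s\,r^{-k_m^2}\gamma_t+y$ from above by discarding all coordinates except the last two, so that for \emph{every} $(s,y)$ the integral is at most $\sqrt{k_m^2+C\epsilon^2}$ (the length-element factor, since the other components of $\partial_\theta\gamma_t$ are $O(\epsilon)$ after rescaling) times the $F$-functional of the multiplicity-one unit circle, after the substitution $k_m\theta\mapsto\theta$; this gives $\lambda(\gamma_t)\leq\sqrt{k_m^2+C\epsilon^2}\,\lambda(S^1)$ uniformly, and monotonicity finishes the proof. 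If you keep your route, either prove the small-scale density estimate or supply a precise reference for (upper semi)continuity of entropy under smooth convergence of compact immersed submanifolds; otherwise the paper's two-line pointwise bound is the cleaner path.
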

\begin{proof}
First, we bound $\sup_t \lambda(\gamma_t)$ from below. We compute
\[
\frac{1}{\sqrt{4\pi}} \int_{s\gamma_t} e^{-\frac{|x|^2}{4}} = s\sqrt{\pi} \exp\left(-\frac{s^2}{4}\sum_{j=1}^m r^{2k_j^2}\right)\sqrt{\sum_{j=1}^m k_j^2r^{2k_j^2}}.
\]
Setting $\tilde{s} = \sqrt{2}\left(\sum_{j=1}^m r^{2k_j^2}\right)^{-1/2}$, and recalling that $\lim_{t \to -\infty} r(t) = \infty$, we have
\[ 
\sup_t \lambda(\gamma_t) \geq \lim_{t \to -\infty} \frac{1}{\sqrt{4\pi}} \int_{\tilde{s}\gamma_t} e^{-\frac{|x|^2}{4}} = \lambda(S^1) \lim_{t \to -\infty} \frac{\sqrt{\sum_{j=1}^m k_j^2r^{2k_j^2}}}{\sqrt{\sum_{j=1}^m r^{2k_j^2}}} = k_m\lambda(S^1).
\]

Second, we bound $\sup_t \lambda(\gamma_t)$ from above. Let $\epsilon > 0$. By Proposition \ref{prop:toruscurve}, there is a $T_{\epsilon} < 0$ so that $r^{-1} < \epsilon$ and $r \geq 1$ for all $t \leq T_{\epsilon}$. We compute
\[ \begin{split}
\frac{1}{\sqrt{4\pi}}\int_{sr^{-k_m^2}\gamma_t + y} e^{-\frac{|x|^2}{4}}
& \leq \sqrt{k_m^2 + C\epsilon^2}\frac{s}{\sqrt{4\pi}}
\int_0^{2\pi}e^{-\frac{s^2}{4}((y_{2m-1}-\cos(k_m\theta))^2 + (y_{2m}-\sin(k_m\theta))^2)}d\theta\\
& = \sqrt{k_m^2 + C\epsilon^2}\frac{s}{\sqrt{4\pi}}
\int_0^{2\pi}e^{-\frac{s^2}{4}((y_{2m-1}-\cos(\theta))^2 + (y_{2m}-\sin(\theta))^2)}d\theta\\
& \leq \sqrt{k_m^2 + C\epsilon^2}\lambda(S^1),
\end{split} \]
where the second line follows from the change of variables $k_m\theta \mapsto \theta$ and the periodicity of the sinusoidal functions. By the monotonicity of entropy, we conclude that
\[ \sup_t \lambda(\gamma_t) = \lim_{t \to -\infty} \lambda(\gamma_t) \leq \lim_{\epsilon \to 0} \sqrt{k_m^2 + C\epsilon^2}\lambda(S^1) = k_m \lambda(S^1).
\]
Hence, we obtain the desired equality.
\end{proof}

Recall that in \cite[Corollary 0.6]{CM-complexity}, Colding-Minicozzi show that there are universal constants $C_n$, depending only on the intrinsic dimension $n$, so that if $M_t^n \subset \R^N$ is an ancient solution to the mean curvature flow, then $M_t$ lies in a Euclidean subspace of dimension $C_n\sup_t \lambda(M_t)$. As an initial application of the existence of the ancient solution $\gamma_t$, we use the entropy computation in Corollary \ref{cor:torusentropy} to bound the constant $C_1$ in this result. For the torus curve solution $\gamma_t^{(k_1, \hdots, k_m)}$, we obtain the bound $\lambda(S^1)C_1 \geq \frac{2m}{k_m}$, which is maximized when $k_m$ is as small as possible. Since $k_1,\ \hdots,\ k_m$ must be an increasing list of positive integers, we have $k_m \geq m$. Hence, the torus curve solution gives the bound $\lambda(S^1)C_1 \geq 2$, identical to the bound from the shrinking circle solution in $\R^2$.

Since the constant speed sinusoidal functions are natural choices for linearly independent functions with compact images, this example suggests the following conjecture for the sharp constant $C_1$ in \cite[Corollary 0.6]{CM-complexity}.

\begin{conjecture}\label{conj:c1}
The sharp value of $C_1$ is $\frac{2}{\lambda(S^1)}$. In particular, any ancient curve shortening flow solution $M_t^1 \in \R^N$ that does not lie in a lower dimensional Euclidean subspace satisfies
\[ \sup_t \lambda(M_t) \geq \frac{N}{2}\lambda(S^1). \]
\end{conjecture}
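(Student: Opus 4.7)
The plan is to reduce the conjecture to a rigidity statement for the tangent flow at $-\infty$ and then to a sharp eigenmode count on that tangent flow.

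First, by Huisken's monotonicity formula $t \mapsto \lambda(M_t)$ is non-increasing, so $\sup_t \lambda(M_t) = \lim_{t \to -\infty} \lambda(M_t)$. Up to subsequence, the rescalings $M_t/\sqrt{-t}$ converge as Brakke flows to a tangent flow $\Sigma_\infty$ at $-\infty$ whose entropy equals this limit. Any smooth closed $1$-dimensional self-shrinker in $\R^N$ must lie in a $2$-plane: in an arclength parametrization the equation $\gamma''(s) = -\gamma^\perp(s)/2$ forces $\gamma''$ into $\mathrm{span}(\gamma, \gamma')$, so ODE uniqueness confines $\gamma$ to the $2$-plane spanned by $\gamma(0)$ and $\gamma'(0)$. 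Thus $\Sigma_\infty$ is a multiplicity-$k$ planar self-shrinker, and since the shrinking circle minimizes entropy among closed planar self-shrinkers for curve shortening, $\lambda(\Sigma_\infty) \geq k\,\lambda(S^1)$.

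The remaining task is to show $k \geq N/2$. Following the Colding--Minicozzi strategy of \cite{CM-complexity}, I would pull back the $N$ ambient coordinates to $M_t/\sqrt{-t}$ and analyze their evolution. With $\tau = -\log(-t)$ the rescaled MCF gives $\partial_\tau y_i = \Delta y_i + y_i/2$ for each coordinate $y_i$, so a mode $\phi$ with $L\phi = \mu\phi$ on the tangent flow evolves as $e^{(\mu + 1/2)\tau}$, forcing $\mu \geq -1/2$ in order to remain bounded as $\tau \to -\infty$. On the parametrized $k$-fold cover of the shrinking circle $S^1(\sqrt{2})$, the drift Laplacian $L = \Delta - \tfrac{1}{2}\langle x, \nabla_\Sigma\rangle$ reduces to the ordinary Laplacian (because $x$ is normal to the circle), with spectrum $\{-j^2/(2k^2) : j \in \Z_{\geq 0}\}$ and eigenfunctions $\cos(j\theta), \sin(j\theta)$ on the parameter circle. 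The bound $\mu \geq -1/2$ becomes $j \leq k$; discarding the constant $j = 0$ mode, at most $2k$ linearly independent non-constant eigenmodes remain. Hence the $N$ linearly independent ambient coordinates satisfy $N \leq 2k$, and
\[
\sup_t \lambda(M_t) = \lambda(\Sigma_\infty) \geq k\,\lambda(S^1) \geq \tfrac{N}{2}\,\lambda(S^1).
\]

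The main obstacle lies in making the mode-count rigorous: the existing Colding--Minicozzi argument uses a coarser spectral bound that loses the sharp constant, so obtaining $C_1 = 2/\lambda(S^1)$ requires new input. In particular, one must (i) show that the pullback of each ambient coordinate has a well-defined asymptotic eigenmode on the parametrized tangent flow, perhaps through a Lojasiewicz-type decay argument in the spirit of \cite{CM-uniqueness}; (ii) establish that distinct ambient coordinates yield linearly independent eigenmodes; and (iii) handle the case where $\Sigma_\infty$ is a non-circular Abresch--Langer shrinker by exploiting the strict inequality $\lambda(\text{Abresch--Langer}) > \lambda(S^1)$. The announced sequel \cite{SS}, which refines \cite{CM-complexity} using the tangent flow at $-\infty$, appears to be the natural setting in which to carry out this program.
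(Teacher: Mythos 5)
This statement is a conjecture: the paper offers no proof of it, only the one-sided evidence from the torus-curve examples (Corollary \ref{cor:torusentropy} shows $\lambda(S^1)C_1 \geq 2$, i.e.\ the constant could not be smaller), and it explicitly defers any codimension bound in terms of the tangent flow at $-\infty$ to the work in preparation \cite{SS} --- and even there only under a hypothesis of sufficiently rapid convergence to the multiplicity-$m$ circle. So there is no paper proof to compare against, and your proposal should be judged on its own. As a heuristic it is exactly the right one: the mode count $j \leq k$ on the $k$-fold circle with the $2k$ non-constant eigenfunctions matches the examples $\gamma_t^{(1,\dots,m)} \subset \R^{2m}$ with tangent flow the multiplicity-$m$ circle. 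But as a proof it has gaps beyond the ones you acknowledge, and some of your intermediate claims would fail as stated.

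Concretely: (i) the tangent flow at $-\infty$ is only a subsequential Brakke/varifold limit, so the smooth Abresch--Langer-type classification you invoke does not apply directly; the limit can be non-compact or singular, and the conjecture is stated for arbitrary ancient curve shortening flows, not compact ones. In particular a multiplicity-$k$ line has $\lambda = k$, so your inequality $\lambda(\Sigma_\infty) \geq k\,\lambda(S^1)$ is simply false in that case, and for a non-compact limit the ``coordinates stay bounded as $\tau \to -\infty$'' input of your mode analysis disappears as well; this case needs its own argument, not a remark. (ii) Passing from ``the rescaled flow subconverges to $\Sigma_\infty$'' to ``each coordinate decomposes into modes evolving exactly like $e^{(\mu+1/2)\tau}$'' presupposes uniqueness of the tangent flow and strong, graphical convergence over the $k$-fold cover; for limits of multiplicity $k \geq 2$ (which is precisely the relevant regime here, as the paper's own examples show) neither is known, and the existing {\L}ojasiewicz technology of \cite{CM-uniqueness} does not cover higher multiplicity. (iii) Your count silently discards the constant mode, but $\mu = 0 \geq -1/2$ is an allowed bounded mode, and a coordinate orthogonal to the plane of the limit circle vanishes on the limit, so a priori its leading asymptotics could be constant; without a normalization of translations (or a first-moment identity) the count is $N \leq 2k+1$, which only yields $\sup_t \lambda(M_t) \geq \frac{N-1}{2}\lambda(S^1)$, strictly weaker than the conjecture. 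Finally, the linear-independence transfer in your item (ii) --- that $N$ independent ambient coordinates produce $N$ independent \emph{allowed} asymptotic modes rather than, say, modes that coincide or vanish in the limit --- is exactly where Colding--Minicozzi's argument loses the sharp constant, so it is the heart of the matter rather than a technical afterthought.
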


As a second application of the existence of the ancient solution $\gamma_t$, we note that the codimension of an ancient solution cannot be bounded by information about its tangent flow at $t=0$.

\begin{corollary}\label{cor:nobound}
For any integer $m$, there is an ancient curve shortening flow that does not lie in any $(2m-1)$-dimensional Euclidean subspace whose tangent flow at $t=0$ is the multiplicity 1 circle.
\end{corollary}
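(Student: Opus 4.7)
The plan is to apply Propositions \ref{prop:toruscurve} and \ref{prop:torustangentflow} to a torus curve solution $\gamma_t^{(k_1,\dots,k_m)}$ in which $k_1=1$. Concretely, I would choose the increasing list of positive integers $(k_1,\dots,k_m) = (1,2,\dots,m)$, or any other strictly increasing sequence starting with $1$.

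By Proposition \ref{prop:toruscurve}, the resulting curve $\gamma_t^{(1,2,\dots,m)}$ is a well-defined ancient solution to the curve shortening flow in $\R^{2m}$, and because its $2m$ coordinate functions
\[
r^{k_j^2}\cos(k_j\theta),\qquad r^{k_j^2}\sin(k_j\theta),\qquad j=1,\dots,m,
\]
are linearly independent, the solution does not lie in any $(2m-1)$-dimensional Euclidean subspace. By Proposition \ref{prop:torustangentflow}, the tangent flow at $t=0$ is the multiplicity $k_1$ circle, which equals the multiplicity $1$ circle by our choice $k_1=1$.

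There is essentially no obstacle: the corollary is an immediate specialization of the preceding two propositions, and the only content is the observation that the parameter $k_1$ governing the tangent flow at $0$ can be chosen independently of the dimension $2m$ in which the curve genuinely lives, so there is no tension between the requirement of full-dimensionality and the requirement that the forward tangent flow be a multiplicity-one circle.
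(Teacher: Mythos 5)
Your proposal is correct and coincides with the paper's own proof: choose a torus curve solution with $k_1=1$ and invoke Propositions \ref{prop:toruscurve} and \ref{prop:torustangentflow}. Nothing further is needed.
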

\begin{proof}
Take the torus curve solution $\gamma^{(k_1, \hdots, k_m)}$ with $k_1 = 1$ and apply Proposition \ref{prop:torustangentflow}.
\end{proof}

Despite the fact that information about an ancient solution as $t \to 0$ \emph{cannot} be used to bound its codimension, there is hope that information about the solution as $t \to -\infty$ \emph{can} bound the codimension. Due to a recent result of Colding-Minicozzi (see \cite[Theorem 0.9]{CM-complexity}), if the tangent flow to an $n$-dimensional ancient solution $M_t^n$ at $-\infty$ is a round cylinder, then $M_t^n$ lies in a subspace of dimension $n+1$. In \cite{SS}, motivated by the limiting behavior of the torus curve solution at $-\infty$, we prove a sharp codimension bound for ancient solutions that converge sufficiently rapidly to the multiplicity $m$ circle as $t \to -\infty$.

\subsection{Helices}
It is important to note that the torus curve construction only works in even dimensional Euclidean spaces. Here, we construct a similar class of ancient curve shortening flows that lie in odd dimensional Euclidean spaces.

Let $k_1,\ \hdots,\ k_m$ be an increasing list of positive integers. We construct a $t$-parametrized family of curves $\Gamma^{(k_1, \hdots, k_m)}_t \subset \R^{2m+1}$ (we denote it by $\Gamma_t$ when the integers $k_j$ are implied for ease of notation) with coordinate functions of the form
\begin{equation}\label{eq:helix}
(\Gamma_t(s))_{2j-1} = r(t)^{k_j^2}\cos(k_js)~,~(\Gamma_t(s))_{2j} = r(t)^{k_j^2}\sin(k_js)~,~(\Gamma_t(s))_{2m+1} = s
\end{equation}
for $j = 1, \hdots, m$ and $s \in \R$, where $r(t)$ is a positive function.

When $m=1$, the curve $\Gamma_t$ is a standard helix in $\R^3$. For larger values of $m$, the curve $\Gamma_t^{(k_1, \hdots, k_m)}$ generalizes the helix to higher codimension in the sense that the projection along the ``helical axis'' (i.e.~the $x_{2m+1}$-axis) is the compact torus curve $\gamma_{t'}^{(k_1,\hdots, k_m)}$ in lieu of a circle.

\begin{proposition}\label{prop:helix}
There is a positive function $r(t)$ defined for all $t\in\R$ satisfying 
\begin{equation}\label{eq:helixlimit}
\lim_{t \to -\infty} r(t) = +\infty \text{~and~} \lim_{t \to +\infty} r(t) = 0
\end{equation}
so that the family of curves $\Gamma_t$ with coordinate functions given by (\ref{eq:helix}) defines an ancient solution to the curve shortening flow. In particular, $\Gamma_t$ does not lie in any $2m$-dimensional Euclidean subspace.
\end{proposition}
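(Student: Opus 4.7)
My plan is to follow the template of Proposition \ref{prop:toruscurve}: derive an ODE for $r(t)$ by matching the heat equation component by component, then analyze that ODE. First I would compute $(\partial_s\Gamma_t)_{2j-1}=-k_jr^{k_j^2}\sin(k_js)$, $(\partial_s\Gamma_t)_{2j}=k_jr^{k_j^2}\cos(k_js)$, and $(\partial_s\Gamma_t)_{2m+1}=1$, which gives
\[
|\partial_s\Gamma_t|^2 \;=\; 1+\sum_{j=1}^m k_j^2 r^{2k_j^2}.
\]
This is $s$-independent, so $\Delta_{\Gamma_t}=|\partial_s\Gamma_t|^{-2}\partial_s^2$. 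Comparing $\Delta_{\Gamma_t}\Gamma_t$ with $\partial_t\Gamma_t$ coordinate by coordinate, the $(2m+1)$-st equation holds automatically (both sides vanish, since $s$ is linear in $s$ and independent of $t$), while each of the first $2m$ equations reduces to the single ODE
\[
r' \;=\; \frac{-r}{1+\sum_{j=1}^m k_j^2 r^{2k_j^2}}.
\]

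Next I would construct the required solution to this ODE. The key observation is that, along any solution, the quantity
\[
F(r) \;:=\; \log(r^2)+\sum_{j=1}^m r^{2k_j^2}
\]
satisfies $\tfrac{d}{dt}F(r(t))=-2$, because $F'(r)=\tfrac{2}{r}\bigl(1+\sum_{j=1}^m k_j^2 r^{2k_j^2}\bigr)$. Hence $F(r(t))=-2t+C$ for some constant $C$, which we normalize to $0$. Since $F:(0,\infty)\to\R$ is smooth and strictly increasing (every term of $F'$ is positive on $(0,\infty)$), with $\lim_{r\to 0^+}F(r)=-\infty$ and $\lim_{r\to\infty}F(r)=+\infty$, inverting the identity $F(r(t))=-2t$ produces a smooth positive function $r(t)$ defined for all $t\in\R$ satisfying (\ref{eq:helixlimit}). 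This is the analogue of the ODE statement in Appendix Theorem \ref{thm:ODE theory}.

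The final assertion is immediate: the $2m+1$ coordinate functions $\{\cos(k_js),\sin(k_js)\}_{j=1}^m\cup\{s\}$ are linearly independent as functions of $s\in\R$, so $\Gamma_t$ cannot be contained in any $2m$-dimensional affine subspace of $\R^{2m+1}$.

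The only new feature compared with the torus-curve case is the additive ``$+1$'' in the denominator of the ODE. It keeps $|r'|\leq r$ when $r$ is small, forcing approximate exponential decay $r(t)\sim e^{-t}$ as $t\to+\infty$ in place of the finite-time collapse of (\ref{eq:ode}); this is exactly what allows the solution to extend past $t=0$ to an eternal flow, even though only the ancient half is used for the statement. I do not anticipate any genuinely serious obstacle; the substance lies in noticing the conserved combination $F(r)+2t$, after which global existence and the prescribed limits follow by monotone inversion.
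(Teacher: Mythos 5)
Your proposal is correct and follows essentially the same route as the paper: the coordinate computation yields the ODE $r'=-r/\bigl(1+\sum_{j=1}^m k_j^2 r^{2k_j^2}\bigr)$, and the paper's Appendix Theorem A.2 solves it via exactly the conserved quantity you use, $F(r)=\tfrac12\bigl(2\log r+\sum_j r^{2k_j^2}\bigr)$ with $F(r(t))=C-t$, inverted by strict monotonicity to get global existence and the limits \eqref{eq:helixlimit}. The only cosmetic differences are your normalization $C=0$ and the factor of $2$ in $F$, which are immaterial.
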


The proof of Proposition \ref{prop:helix} precisely follows the strategy of Proposition \ref{prop:toruscurve}. For the details, see Theorem \ref{thm:ODE theory2}.

It is important to remark that, unlike the torus curve solution  $\gamma_t$, the helix solution $\Gamma_t$ is not only an ancient solution but an \emph{eternal} solution (i.e.~defined for all $t \in \R)$.

However, the helix solution is not compact, and as we show below, it has infinite entropy.

\begin{proposition}\label{prop:helixentropy}
The curve shortening flow $\Gamma_t^{(k_1,\hdots, k_m)}$ satisfies $\sup_t \lambda(\Gamma_t) = +\infty$.
\end{proposition}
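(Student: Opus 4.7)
The plan is to produce a one-parameter family of test scales $\sigma>0$ in the entropy supremum, with center $y=0$, that already blows up as $t\to-\infty$. The key observation is that the helix is non-compact along the $x_{2m+1}$-axis, so rescaling by a small factor $\sigma$ contracts the axial direction and lets the Gaussian integrate over an essentially infinite length.

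Concretely, I would first compute the length element of the scaled curve $\sigma\Gamma_t$. Since $|\partial_s \Gamma_t|^2 = \sum_{j=1}^m k_j^2 r(t)^{2k_j^2} + 1$, the arclength element along $\sigma\Gamma_t$ (parametrized by $s\in\R$) is $\sigma\sqrt{\sum_j k_j^2 r^{2k_j^2}+1}\,ds$. I would also record the pointwise norm
\[
|\Gamma_t(s)|^2 = \sum_{j=1}^m r(t)^{2k_j^2} + s^2,
\]
which is independent of the angular behavior because of the $\cos^2+\sin^2$ cancellation.

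Combining these two facts, the entropy integrand becomes a product that separates into a constant factor and a Gaussian in $s$:
\[
\frac{1}{\sqrt{4\pi}}\int_{\sigma\Gamma_t} e^{-|x|^2/4}
= \frac{\sigma\sqrt{\textstyle\sum_j k_j^2 r^{2k_j^2}+1}}{\sqrt{4\pi}}\,
e^{-\frac{\sigma^2}{4}\sum_j r^{2k_j^2}}
\int_{-\infty}^{\infty} e^{-\sigma^2 s^2/4}\,ds.
\]
The one-dimensional Gaussian equals $2\sqrt{\pi}/\sigma$, which exactly cancels the $\sigma$ coming from the scaled arclength, yielding
\[
\frac{1}{\sqrt{4\pi}}\int_{\sigma\Gamma_t} e^{-|x|^2/4}
= \sqrt{\textstyle\sum_j k_j^2 r^{2k_j^2}+1}\;
e^{-\frac{\sigma^2}{4}\sum_j r^{2k_j^2}}.
\]

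Taking $\sigma\to 0^+$ (allowed since $\lambda$ is a supremum over all positive scales and centers), the exponential tends to $1$, so
\[
\lambda(\Gamma_t)\ \geq\ \sqrt{\textstyle\sum_{j=1}^m k_j^2\, r(t)^{2k_j^2}+1}.
\]
By Proposition \ref{prop:helix}, $r(t)\to +\infty$ as $t\to -\infty$, so the right side is unbounded in $t$; thus $\sup_t \lambda(\Gamma_t)=+\infty$. There is no real obstacle here — the only subtle point to flag is that, unlike the compact torus curve, one cannot simply bound the entropy by the standard unit-scale comparison, because the non-compactness in the axial direction forces one to send $\sigma\to 0$ to harvest the growth of the arclength prefactor.
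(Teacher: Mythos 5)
Your proof is correct and follows essentially the same strategy as the paper: lower-bound the entropy with a well-chosen rescaling centered at the origin and let $r(t)\to\infty$ as $t\to-\infty$ do the rest (you send the scale $\sigma\to 0^+$ and compute the Gaussian integral exactly, obtaining the lower bound $\sqrt{\sum_j k_j^2 r^{2k_j^2}+1}$, while the paper fixes the scale $r^{-k_m^2}$ and crudely bounds the integrand on the segment $|s|\leq r^{k_m^2}$, obtaining a bound of the same order $r^{k_m^2}$). The computation and the limiting argument are sound, so there is nothing to fix.
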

\begin{proof}
Note that for $r \geq 1$ (i.e.~for $-t$ sufficiently large), the first $2m$ components of $r^{-k_m^2}\Gamma_t$ have absolute value at most $1$. The last component of $r^{-k_m^2}\Gamma_t$ is contained in $[-1, 1]$ for $s \in [-r^{k_m^2}, r^{k_m^2}]$. Then we compute
\[\begin{split}
\frac{1}{\sqrt{4\pi}} \int_{r^{-k_m^2}\Gamma_t} e^{-\frac{|x|^2}{4}} \geq \frac{1}{\sqrt{4\pi}}\int_{-r^{k_m^2}}^{r^{k_m^2}} \exp\left(-\frac{|r^{-k_m^2}\Gamma_t|^2}{4}\right)|\partial_s(r^{-k_m^2}\Gamma_t)|ds \geq \frac{2r^{k_m^2}}{\sqrt{4\pi}}e^{-\frac{2m+1}{4}}.
\end{split} \]
Then by the definition of entropy, we have
\[ \lambda(\Gamma_t) \geq Cr^{k_m^2}. \]
Since $r \to \infty$ as $t \to -\infty$ by Proposition \ref{prop:helix}, we conclude that $\sup_t\lambda(\Gamma_t) = +\infty$.
\end{proof}

\subsection{Product of Torus Curves}
In this subsection, we use the existence of the ancient torus curve solution to construct high codimension ancient solutions for the $n$-dimensional mean curvature flow. Since the Laplacian of a product manifold satisfies $\Delta_{M_1 \times M_2} = \Delta_{M_1} + \Delta_{M_2}$, the product of solutions to the mean curvature flow yields a new solution.

\begin{proof}[Proof of Theorem \ref{thm:compact}]
Take the product of $n$ appropriately chosen torus curves.
\end{proof}

It is straightforward to check that if $M_1 \subset \R^{N_1}$ and $M_2 \subset \R^{N_2}$ have finite entropy, then the product manifold $M_1 \times M_2 \subset \R^{N_1 + N_2}$ satisfies $\lambda(M_1 \times M_2) \leq \lambda(M_1)\lambda(M_2) < \infty$.
Hence, we can relax the compactness requirement to the requirement of finite entropy to obtain the following result.

\begin{theorem}\label{thm:finiteentropy}
Let $N$ and $n$ be positive integers with $N \geq n \geq 2$. There exists an $n$-dimensional, connected, ancient solution to the mean curvature flow in $\R^N$ with finite entropy that does not lie in any $(N-1)$-dimensional Euclidean subspace.
\end{theorem}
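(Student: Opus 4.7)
The plan is to extend the product construction of Theorem \ref{thm:compact} by mixing torus curves with a flat Euclidean factor $\R^q$ to absorb any leftover dimensions. Given $N \geq n \geq 2$, I would construct
\[ M_t := \gamma_{t,1} \times \cdots \times \gamma_{t,p} \times \R^q, \]
where each $\gamma_{t,i}\subset \R^{2m_i}$ is a torus curve produced by Proposition \ref{prop:toruscurve} with its own increasing tuple of positive integers, and where $p, q, m_1, \ldots, m_p$ are chosen so that $p + q = n$ and $2(m_1 + \cdots + m_p) + q = N$. With this, $M_t$ is automatically an $n$-dimensional submanifold of $\R^N$.

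Such a decomposition exists in every case. The constraints force $q \equiv N - n \pmod 2$ and $\sum_i m_i = (N - n + p)/2 \geq p$; concretely, one can take $p = 0, q = n$ (no torus curve factors) if $N = n$; $p = 1, q = n - 1, m_1 = (N - n + 1)/2$ if $N - n$ is odd; and $p = 2, q = n - 2, m_1 = 1, m_2 = (N - n)/2$ if $N - n \geq 2$ is even. The hypothesis $n \geq 2$ is used only in the last case, to accommodate two torus curve factors simultaneously.

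The four required properties then follow directly. The flow is ancient because each torus curve factor is ancient by Proposition \ref{prop:toruscurve}, $\R^q$ is stationary, and the product of MCFs is an MCF via $\Delta_{M_1 \times M_2} = \Delta_{M_1} + \Delta_{M_2}$. It is connected as a product of connected factors and has intrinsic dimension $p+q=n$. Finite entropy follows from Corollary \ref{cor:torusentropy}, the identity $\lambda(\R^q) = 1$, and the product inequality $\lambda(M_1 \times M_2) \leq \lambda(M_1)\lambda(M_2)$ stated just before the theorem.

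The last property, that $M_t$ does not lie in any $(N-1)$-dimensional Euclidean subspace, is the most delicate. I would first establish the stronger claim that each torus curve factor does not lie in any proper affine hyperplane: its coordinate functions $r^{k_j^2}\cos(k_j\theta)$ and $r^{k_j^2}\sin(k_j\theta)$ are distinct Fourier modes in $\theta$, so no nontrivial affine-linear combination of them vanishes identically. The product $M_t$ then inherits this property by a routine factor-by-factor argument: if $\langle a, x\rangle = c$ on $M_t$, then fixing all factors except the $i$-th forces $\langle a_i, \cdot\rangle$ to be constant on that factor and hence $a_i = 0$, so $a = 0$. The only real obstacle in the whole argument is the parity bookkeeping above; the geometric content is essentially already contained in Theorem \ref{thm:compact} and the entropy estimate of Corollary \ref{cor:torusentropy}.
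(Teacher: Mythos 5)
Your proposal is correct and follows essentially the same route as the paper, which proves Theorem \ref{thm:finiteentropy} by taking products of torus curves with a static Euclidean factor $\R^q$ (using the entropy product inequality and $\lambda(\R^q)=1$). You simply make explicit the parity bookkeeping and the affine-span/hyperplane argument that the paper's one-line proof leaves implicit, and these details check out.
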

\begin{proof}
Recall that $\R^k$ is a static solution to the mean curvature flow with entropy 1. Hence, we can take the product of $\R^k$ with torus curves.
\end{proof}

\begin{remark}
Theorem \ref{thm:finiteentropy} can also be proved by the construction of Choi-Mantoulidis (see \cite{ChMa}). However, there are many interesting differences between these two constructions, capturing different features of the ancient mean curvature flow. We list some of them here.

\begin{enumerate}
    \item Our construction only works in $\R^N$; the Choi-Mantoulidis construction also works for arbitrary Riemannian manifolds.
    \item Our construction does not have a limit as $t\to-\infty$; the Choi-Mantoulidis construction converges to an unstable minimal submanifold as $t\to-\infty$.
    \item Our construction provides ancient curve shortening flows in $\R^N$; the Choi-Mantoulidis construction cannot construct ancient curve shortening flows in $\R^N$.
    \item Our construction has explicit tangent flows at $0$ and $-\infty$, and we can compute the entropy explicitly; the Choi-Mantoulidis construction does not carry such feature.
    \item Our construction gives compact ancient mean curvature flows in $\R^N$; the Choi-Mantoulidis construction can only produce non-compact ancient mean curvature flows in $\R^N$.
\end{enumerate}
\end{remark}

We conclude with a conjecture.

\begin{conjecture}
Let $N$ and $n$ be positive integers with $N \geq n$. There exists an $n$-dimensional, connected, ancient solution to the mean curvature flow in $\R^N$ with finite entropy that does not lie in any $(N-1)$-dimensional Euclidean subspace.
\end{conjecture}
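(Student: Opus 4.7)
The conjecture is already known in every case except one. For $n \geq 2$ it is precisely Theorem \ref{thm:finiteentropy}, and for $n = 1$ with $N$ even it follows from Theorem \ref{thm:main}, since compactness implies finite entropy. The remaining content is therefore the case $n = 1$ with $N = 2m+1$ odd, $m \geq 1$: one must construct a full, finite-entropy, ancient curve shortening flow in $\R^{2m+1}$. The plan is to focus exclusively on this case, and the $n$-dimensional conjecture will then follow by taking products with $\R^{n-1}$ as in the proof of Theorem \ref{thm:finiteentropy}.

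The first move is to rule out the naive approach. Enhancing the torus curve by one additional closed coordinate,
\[
(\Gamma_t(\theta))_{2j-1} = r(t)^{k_j^2}\cos(k_j\theta),\ (\Gamma_t(\theta))_{2j} = r(t)^{k_j^2}\sin(k_j\theta),\ (\Gamma_t(\theta))_{2m+1} = \rho(t)F(\theta),
\]
with $F$ smooth and $2\pi$-periodic, fails within the separated-variables framework of Proposition \ref{prop:toruscurve}: the induced metric $\sum_j k_j^2 r^{2k_j^2} + \rho^2(F')^2$ is $\theta$-independent only if $F'$ is constant, which forces $F$ to be constant and kills the extra direction. This is precisely why the helix of Proposition \ref{prop:helix}, for which $F(\theta) = c\theta$ is not $2\pi$-periodic, has infinite entropy. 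So no rigid separated ansatz can succeed, and any honest construction must solve a genuine $(1+1)$-dimensional quasilinear parabolic system.

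The route I would pursue is to impose a discrete symmetry that glues the helix into a closed curve, in the spirit of the group-theoretic framework of Altschuler--Altschuler--Angenent--Wu \cite{AAAW}. Concretely, one looks for closed curves invariant under an $SO(2)^m \times \Z_2$ action on $\R^{2m+1}$, which for $m = 1$ are $(p,q)$-torus knots on a standard torus of revolution in $\R^3$, and uses the cohomogeneity reduction to bring the curve shortening flow down to a one-dimensional semilinear parabolic problem on the orbit space. An ancient solution is then produced by flowing backward in time from a carefully chosen symmetric profile at $t = 0$, with backward-in-time control coming from Huisken's monotonicity formula together with an explicit barrier built out of a torus knot. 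The main obstacle is that the extra odd coordinate is protected only by a discrete, rather than continuous, symmetry, so the backward flow might collapse that coordinate as $t \to -\infty$ and destroy both fullness and finite-entropy simultaneously. Ruling this out, most plausibly by coupling the equivariant parabolic maximum principle with a torus-knot barrier and an entropy bound of the type computed in Corollary \ref{cor:torusentropy}, is the central analytic difficulty.
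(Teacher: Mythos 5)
This statement is a \emph{conjecture} that the paper deliberately leaves open: the authors state explicitly that, beyond Theorems \ref{thm:main} and \ref{thm:finiteentropy}, ``elementary examples in this case remain elusive.'' Your reduction of the problem agrees with theirs --- the cases $n \geq 2$ are Theorem \ref{thm:finiteentropy}, the case $n=1$ with $N$ even follows from Theorem \ref{thm:main} and Corollary \ref{cor:torusentropy} (or just compactness), and the only open content is a full, finite-entropy ancient curve shortening flow in $\R^{2m+1}$. Your observation that the separated-variables ansatz of Proposition \ref{prop:toruscurve} cannot produce a compact curve with a nontrivial odd coordinate is also correct and is essentially the reason the paper resorts to the non-compact helix of Proposition \ref{prop:helix}. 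But from that point on you have a research program, not a proof, and you say so yourself (``the central analytic difficulty''). Since the whole weight of the conjecture sits in exactly that unresolved case, the proposal does not establish the statement.

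Beyond the admitted gap, two steps of the program would fail as described. First, the proposed ``cohomogeneity reduction'' is not available: a closed curve in $\R^{2m+1}$ that is invariant under a continuous subgroup of $SO(2)^m$ (acting on the first $2m$ coordinates) is a single orbit, hence has constant last coordinate and lies in a hyperplane, destroying fullness; what survives is only a discrete symmetry, and a discrete symmetry does not reduce the $(1+1)$-dimensional quasilinear system to a one-dimensional semilinear problem on an orbit space --- this is precisely why the odd-dimensional case is harder than the constructions of \cite{AAAW}. Second, ``flowing backward in time from a carefully chosen profile at $t=0$'' is not a well-posed operation for mean curvature flow; any actual construction must instead produce ancient solutions as limits of flows defined on intervals $[-T_i,0]$ with uniform barriers and entropy bounds (in the spirit of \cite{ChMa}), and building those uniform-in-$T_i$ estimates while keeping the odd coordinate nondegenerate and the entropy finite is exactly the missing ingredient. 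Note also that the paper's forthcoming result \cite{SS} bounds the codimension of ancient solutions that converge sufficiently rapidly to a multiplicity-$m$ circle as $t \to -\infty$, so any candidate must be engineered to avoid that regime --- a constraint your sketch does not address.
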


Together, Theorems \ref{thm:main} and \ref{thm:finiteentropy} imply that there is always a finite entropy, connected, ancient solution of any codimension, \emph{except} in the case of even codimension for the curve shortening flow (i.e.~curves lying in odd dimensional Euclidean spaces). While elementary examples in this case remain elusive, we believe such solutions exist.

\appendix
\section{ODE Theory}

In this section, we prove the ODE result used in the construction of the ancient curve shortening flows above. We prove the solution $r(t)$ to the ODE (\ref{eq:ode}) exists uniquely.

\begin{theorem}\label{thm:ODE theory}
There exists a unique function $r(t)$ that satisfies the ODE
\[r'=\frac{-r}{\sum_{j=1}^m k_j^2 r^{2k_j^2}},\tag{$\star$}\]
and the asymptotic condition
\[\lim_{t\to-\infty}r(t)=+\infty,\ \lim_{t\to 0}r(t)=0,\]
\end{theorem}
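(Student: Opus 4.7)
The plan is to exploit the fact that $(\star)$ is separable with an explicit conserved quantity. Differentiating $G(r) := \sum_{j=1}^m r^{2k_j^2}$ along any hypothetical solution of $(\star)$ yields
\[
\frac{d}{dt} G(r(t)) = \sum_{j=1}^m 2k_j^2 r^{2k_j^2 - 1} r' = \frac{2}{r}\left(\sum_{j=1}^m k_j^2 r^{2k_j^2}\right) r' = -2,
\]
so any positive solution must satisfy $G(r(t)) = -2t + C$ for some constant $C \in \mathbb{R}$. This already tells us essentially everything; the rest is just unpacking the implicit formula.

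For existence, I would normalize the time translation by taking $C = 0$ and \emph{define} $r(t)$ implicitly by $G(r(t)) = -2t$ for $t < 0$. Since $G : (0, \infty) \to (0, \infty)$ is smooth, strictly increasing (as a sum of strictly increasing positive functions with positive exponents), and a bijection onto $(0, \infty)$, the assignment $r(t) := G^{-1}(-2t)$ gives a well-defined smooth function on $(-\infty, 0)$. Differentiating the identity $G(r(t)) = -2t$ in $t$ and solving for $r'$ recovers precisely $(\star)$. The asymptotic behavior then follows immediately from the monotonicity of $G$: $G(r(t)) = -2t \to 0$ as $t \to 0^-$ forces $r(t) \to 0$, while $G(r(t)) \to \infty$ as $t \to -\infty$ forces $r(t) \to \infty$.

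For uniqueness, I would apply Picard--Lindel\"of to $(\star)$, whose right-hand side is smooth on $\{r > 0\}$, to conclude that any positive solution satisfies $G(r(t)) = -2t + C$ on its maximal interval of existence. The boundary condition $\lim_{t \to 0^-} r(t) = 0$ forces $C = \lim_{t \to 0^-}(G(r(t)) + 2t) = 0$, so the solution must coincide with the one constructed above. The maximal existence interval is then exactly $(-\infty, 0)$, since the relation $G(r) = -2t$ admits a positive solution $r$ precisely when $-2t > 0$.

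Since $(\star)$ is explicitly separable, there is no substantial obstacle; the only point requiring care is pinning down the constant of integration using the boundary behavior at $t = 0$. As a bonus, the proof yields the explicit identity $\sum_{j=1}^m r^{2k_j^2} = -2t$, which is precisely the relation invoked in the proof of Proposition~\ref{prop:torustangentflow}.
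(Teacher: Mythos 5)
Your proof is correct and follows essentially the same route as the paper: both integrate the separable ODE via the conserved quantity $\sum_{j=1}^m r^{2k_j^2}+2t$ (your $G$ is just $2F$ in the paper's notation), fix the integration constant to $0$ using the condition at $t\to 0^-$, and define $r(t)$ by inverting the strictly increasing function $G$, reading off the asymptotics from that inversion. The invocation of Picard--Lindel\"of is unnecessary (the conserved-quantity identity plus injectivity of $G$ already gives uniqueness), but this is a harmless redundancy, not a gap.
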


\begin{proof}
Define 
\[F(r)=\frac{1}{2}\left(\sum_{j=1}^m  r^{2k_j^2}\right).\]
Then by taking differential we notice that if 
\[F(r(t))+t=C\]
for a constant $C$, then $r(t)$ is a solution to the ODE ($\star$). Thus it remains to show that there exists a unique $r(t)$ such that $F(r(t))+t$ is a constant and $r(t)$ satisfies the asymptotic behavior.

Since we require $\lim_{t\to 0}r(t)=0$, we must set the constant $C$ to be $0$. Then we can define $r(t)>0$ by solving the implicit equation
\[F(r(t))=-t\]
on the domain $t\in(-\infty,0)$. Since $F(r)$ is strictly monotone in $r$ when $r>0$, $r(t)$ is defined uniquely, and $r(t)$ is monotone decreasing. In particular for $t<-m/2$, $r(t)\geq 1$. Finally, since 
\[-t=F(r(t))\leq (m r(t)^{2k_m^2})/2,\] 
we have $r(t)\to \infty$ as $t\to-\infty$. Therefore this $r(t)$ has the required asymptotic behavior.
\end{proof}

Similarly we can prove the following existence and asymptotic behavior of solutions to Proposition \ref{prop:helix}.

\begin{theorem}\label{thm:ODE theory2}
There exists a family of functions $r(t)$ that satisfy the ODE
\[r'=\frac{-r}{1+\sum_{j=1}^m k_j^2 r^{2k_j^2}},\]
and the asymptotic condition
\[\lim_{t\to-\infty}r(t)=+\infty,\ \lim_{t\to \infty}r(t)=0.\]
These $r(t)$ satisfy the requirement of Proposition \ref{prop:helix}.
\end{theorem}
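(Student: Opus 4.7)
The plan is to mimic the proof of Theorem \ref{thm:ODE theory}, but with an antiderivative that accounts for the constant $1$ in the denominator (which is present because the last coordinate of the helix contributes $1$ to $|\partial_s \Gamma_t|^2$). First I would rewrite the ODE in separated form
\[
\frac{1+\sum_{j=1}^m k_j^2 r^{2k_j^2}}{r}\, dr = -\, dt,
\]
and integrate the left side to define
\[
G(r) := \log r + \tfrac{1}{2}\sum_{j=1}^m r^{2k_j^2}, \qquad r>0.
\]
A direct differentiation check then shows that any positive $r(t)$ satisfying the implicit equation $G(r(t)) = -t - C$ for some constant $C \in \R$ automatically solves the ODE.

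Next I would establish that this implicit equation defines a smooth function $r(t)$ on all of $\R$. The key observation is that $G'(r) = \frac{1}{r} + \sum_j k_j^2 r^{2k_j^2 - 1} > 0$ for $r > 0$, so $G$ is strictly increasing. Moreover, because of the $\log r$ term we have $G(r) \to -\infty$ as $r \to 0^+$, and because of the highest-degree polynomial term we have $G(r) \to +\infty$ as $r \to +\infty$. Hence $G : (0,\infty) \to \R$ is a smooth bijection, and for each choice of $C$ we may set $r(t) := G^{-1}(-t-C)$, which is defined and positive for all $t \in \R$. The freedom in choosing $C$ explains why Theorem \ref{thm:ODE theory2} produces a \emph{family} of solutions rather than a unique one, in contrast to Theorem \ref{thm:ODE theory}, where the boundary condition $r(t) \to 0$ as $t \to 0$ pinned down $C = 0$.

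Finally I would verify the asymptotic behavior directly from the implicit description: as $t \to -\infty$, $-t - C \to +\infty$ and hence $r(t) = G^{-1}(-t-C) \to +\infty$; as $t \to +\infty$, $-t - C \to -\infty$ and hence $r(t) \to 0^+$. Plugging the resulting $r(t)$ into the formula (\ref{eq:helix}) for $\Gamma_t$ and repeating the tangent/time-derivative computation from the proof of Proposition \ref{prop:toruscurve}, with the denominator now augmented by the $1$ coming from $(\partial_s \Gamma_t)_{2m+1} = 1$, verifies Proposition \ref{prop:helix}.

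The only mildly subtle point, and the main place this proof differs from the previous one, is handling the logarithmic singularity of $G$ at $r = 0$: in Theorem \ref{thm:ODE theory} the polynomial $F$ extends continuously to $r = 0$, so the solution is naturally defined on $(-\infty, 0)$, whereas here the $\log r$ term pushes the domain of definition to all of $\R$ and simultaneously destroys the uniqueness one gets from a boundary condition at a finite time. Everything else is a routine inverse function theorem / separable ODE argument.
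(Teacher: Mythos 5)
Your proposal is correct and follows essentially the same route as the paper: the paper also integrates the separated equation to get $F(r)=\tfrac12\bigl(2\log r+\sum_{j=1}^m r^{2k_j^2}\bigr)$, solves $F(r(t))=C-t$ using strict monotonicity of $F$, and reads off the asymptotics from the polynomial term as $t\to-\infty$ and the $\log r$ term as $t\to+\infty$. Your added remarks on why the logarithm extends the domain to all of $\R$ and yields a one-parameter family rather than a unique solution are consistent with, and slightly more explicit than, the paper's treatment.
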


\begin{proof}
The proof is the same as the proof of Theorem \ref{thm:ODE theory}, except here we use the function
\[F(r)=\frac{1}{2}\left(2\log r(t)+\sum_{j=1}^m r^{2k_j^2}\right).\]

Then $F(r)=C-t$ for a constant $C$. Therefore for a fixed constant we can solve $r(t)$ for any $t\in(-\infty,\infty)$ satisfying $F(r)=C-t$. $F(r)$ is strictly monotone in $r$ when $r>0$, so $r(t)$ is defined uniquely for a given $C$, and monotone decreasing. In particular when $t<C-m/2$, we have $r(t)>1$ and
\[C-t=F(r(t))\leq m r^{2k_m^2}/2.\]
So $r(t)\to \infty$ as $t\to-\infty$. When $t\geq C-m/2$, we have $r(t)\leq 1$ and
\[C-t=F(r(t))\geq \log r(t).\]
So $r(t)\to 0$ as $t\to \infty$.
\end{proof}

\bibliography{bibfile}
\bibliographystyle{alpha}

\end{document}